\documentclass[envcountsame,runningheads,notitlepage]{llncs}

\usepackage[a4paper, margin=1.1in]{geometry}
\usepackage{hyperref}
\usepackage{amsfonts,amsmath,amscd,amssymb}
\setlength{\marginparwidth}{2.5cm}

%
%

\title{The Scholz conjecture on addition chains is true for $n=2^m(23)+7$, $m \in \mathbb{N}^*$}
\titlerunning{The Scholz conjecture for $n=2^m(23)+7$, $m \in \mathbb{N}^*$}
\date{}

\author{
  Amadou TALL\inst{1}
}

\institute{Departement de Mathématiques et Informatique\\Université Cheikh Anta Diop de Dakar\\
  \href{mailto:amadou7.tall@ucad.edu.sn}{amadou7.tall@ucad.edu.sn}
}  


\begin{document}
  \maketitle

\begin{abstract}
The Scholz conjecture on addition chains states that $\ell(2^n-1) \leq \ell(n) + n -1$ for all integers $n$ where $\ell(n)$ stands for the minimal length of all addition chains for $n$. It is proven to hold for infinite sets of integers.  In this paper, we will prove that the conjecture still holds for $n=2^m(23)+7$. It is the first set of integers given by Thurber \cite{9} to prove that there are an infinity of integers satisfying  $\ell(2n) = \ell(n)$. Later on, Thurber \cite{4} give a second set of integers with the same properties ($n=2^{2m+k+7} + 2^{2m+k+5} + 2^{m+k+4} + 2^{m+k+3} + 2^{m+2} + 2^{m+1} + 1$). We will prove that the conjecture holds for them as well.
\end{abstract}

\keywords{Addition chain, Scholz-Bauer conjecture, minimal length, factor method}

\section*{Introduction}
Exponentiation is a key operation in mathematics. It is often seen as a number of multiplications. Let $x$ be an integer, finding $x^n$ for a given $n$ seems to be easy. We need to do $n$ multiplications. But what if $n$ is getting very large. 
It is a very important operation in many areas like cryptography. The best tool know to do fast exponentiation is addition chain. 

\begin{definition}
An addition chain for an integer $n$ is a set $\mathcal{C} =\{a_0=1,a_1,a_2,\ldots,a_r\}$ such that the last element $a_r$ is $n$ and every element $a_k$ is the sum of two previous elements $a_i$ and $a_j$. The integer $r$ is the length of the chain and is denoted $\ell(c)$.
\end{definition}
One can easily see that there can be many addition chains of different length for the same integer $n$ and finding the minimal length for all addition chains of $n$ (denoted $\ell(n)$) is a NP-complete problem. Just to give an example, on can think that the fastest way to reach $2n$ is by reaching $n$ and then doubling it, which means that 
$\ell(2n) = \ell(n) +1$. It has been proven that there are infinitely many integers which don't follow that rule. We have $\ell(382) =  \ell(191) =11$.  The most famous conjecture on addition chain is giving an upper bound on the length of addition chains for integers with only ''1''s in their binary expansion ($n = 2^a-1$ for some $a$).  
 
 \begin{conjecture}
 The Scholz conjecture on addition chains states that
 $$
 \ell(2^n-1) \leq \ell(n) + n - 1, \ \forall n \in \mathbb{N}
 $$
 \end{conjecture}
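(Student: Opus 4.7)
The plan is to construct, for every $n$, an addition chain for $2^n - 1$ of length at most $\ell(n) + n - 1$ by lifting an optimal addition chain for $n$. I would fix an optimal chain $1 = a_0, a_1, \ldots, a_r = n$ with $r = \ell(n)$, and build in parallel a larger chain which contains $2^{a_i} - 1$ for every index $i$.

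The engine of the construction is the identity
\[ 2^{u+v} - 1 = 2^v (2^u - 1) + (2^v - 1). \]
Assuming inductively that $2^{a_j} - 1$ lies in the chain for every $j < i$, and that step $i$ of the original chain reads $a_i = a_{j_i} + a_{k_i}$ with $a_{j_i} \geq a_{k_i}$, one can produce $2^{a_i} - 1$ by first doubling $2^{a_{j_i}} - 1$ a total of $a_{k_i}$ times and then adding $2^{a_{k_i}} - 1$, at a cost of $a_{k_i} + 1$ new chain elements. Summing over the $r$ steps yields a chain for $2^n - 1$ of length at most $\ell(n) + \sum_{i=1}^{r} a_{k_i}$, so the Scholz inequality reduces to the combinatorial claim that the optimal chain for $n$ can be chosen so that $\sum_{i=1}^{r} a_{k_i} \leq n - 1$.

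The ideal case is a \emph{star chain}, i.e.\ one in which every step uses the immediately preceding element as its larger summand, $a_{j_i} = a_{i-1}$. Then $a_{k_i} = a_i - a_{i-1}$, and the sum telescopes to $a_r - a_0 = n - 1$, giving $\ell(2^n - 1) \leq \ell(n) + n - 1$ at once. So the strategy would be: either exhibit for each $n$ an optimal chain that is a star chain, or, when no such chain exists, modify the lift to compensate.

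The principal obstacle is exactly this compensation step, because it is known that some $n$ have no optimal addition chain which is a star chain. For those $n$ the naive lift of a non-star optimal chain may give $\sum_i a_{k_i} > n - 1$, and one must either reuse previously constructed elements of the tower $2^{a_j} - 1$ to absorb some doublings, or replace each offending non-star block by an equivalent block whose contribution to the sum is still bounded by the telescoping amount. The present paper executes precisely such a refinement for the two Thurber families $n = 2^m(23) + 7$ and $n = 2^{2m+k+7} + 2^{2m+k+5} + 2^{m+k+4} + 2^{m+k+3} + 2^{m+2} + 2^{m+1} + 1$; upgrading this family-by-family analysis to a uniform argument valid for every $n \in \mathbb{N}$ is the central difficulty, and is the reason the Scholz conjecture has remained open since 1937.
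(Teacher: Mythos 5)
The statement you were asked to prove is labelled a \emph{conjecture}, and the paper contains no proof of it: what the paper actually establishes is the inequality $\ell(2^n-1)\leq \ell(n)+n-1$ for two specific Thurber families ($n=2^m(23)+7$ and the seven-term binary family), plus the easy propagation step from $n$ to $2n$ when $\ell(2n)=\ell(n)+1$. So there is no ``paper's own proof'' of the general statement to compare yours against, and your final paragraph describes the paper's actual contribution accurately.

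Your lifting construction itself is correct as far as it goes, but it is precisely Brauer's 1939 argument (reference [6] of the paper): the identity $2^{u+v}-1=2^v(2^u-1)+(2^v-1)$, applied along a chain and telescoped, proves $\ell(2^n-1)\leq \ell^*(n)+n-1$, where $\ell^*(n)$ is the minimal length over \emph{star} chains only. The genuine gap --- which you name yourself --- is the passage from $\ell^*(n)$ to $\ell(n)$: Hansen and Knuth showed that there exist $n$ with $\ell^*(n)>\ell(n)$ (the smallest known being $n=12509$), so ``exhibit an optimal chain that is a star chain'' is not always possible, and no uniform compensation for the non-star steps is known. Your proposal therefore proves the conjecture only for integers admitting an optimal star chain; for the remaining $n$ the sum $\sum_i a_{k_i}$ can exceed $n-1$ and the argument stalls. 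This is exactly why the conjecture is open, and why the paper retreats to a family-by-family verification: for each Thurber family it writes down an explicit near-optimal chain with enough structure (long doubling runs plus a few small steps) that the lift can be costed exactly and shown to meet the bound $\ell(n)+n-1$. In short: your sketch is a faithful account of the state of the art, not a proof of the statement, and the paper does not claim one either.
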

 
 Several decades of research and the conjecture has been proved to hold for $n < 5784689$. It has been proven to hold for several sets of integers.  We know that if it is true for $n$ and that $\ell(2n) = \ell(n) + 1$, then it is also true for $2n$. We would like to know if it holds when $\ell(2n) = \ell(n)$. An answer for the general case doesn't exist yet.

In this paper, we will investigate the two cases.
\begin{enumerate}
\item We will give a very simple proof  that the conjecture holds for $2n$ if it is for $n$ and $\ell(2n) = \ell(n) + 1$.
\item We will also prove that it holds  for integers $n = 2^m(23) + 7$ which are known to be among the first infinite sets of integers verifying $\ell(2n) = \ell(n)$.
\end{enumerate} 

The main result from Thurber \cite{9} is
\begin{theorem}
Let $m$ be an integer greater than $5$. Then 
$$
\ell(2^m(23) + 7) = m +8 = \ell(2 \times (2^m(23) + 7) ) 
$$
\end{theorem}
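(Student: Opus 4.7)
I would split the argument into matching upper bounds, shown by exhibiting explicit addition chains, and lower bounds, obtained from the classical small-step analysis.

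For the upper bounds, the chain
$$ 1,\ 2,\ 4,\ 5,\ 7,\ 14,\ 16,\ 23,\ 46,\ 92,\ \ldots,\ 23\cdot 2^{m},\ 23\cdot 2^{m}+7 $$
reaches $23$ (while keeping $7$ on board) in seven steps, then doubles $23$ a further $m$ times, then performs one final addition using the stored $7$, for total length $m+8$. For $2n=23\cdot 2^{m+1}+14$ one cannot simply append a doubling (that would give $m+9$), so I would use instead
$$ 1,\ 2,\ 4,\ 5,\ 9,\ 14,\ 23,\ 46,\ 92,\ 184,\ \ldots,\ 23\cdot 2^{m+1},\ 23\cdot 2^{m+1}+14, $$
which produces both $92$ and $14$ in eight steps, doubles $m-1$ more times to reach $23\cdot 2^{m+1}$, and finishes by adding $14$, again of length $m+8$.

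For the lower bounds, let $s$ denote the number of \emph{small steps} of a chain (positions with $a_i<2a_{i-1}$); then the length of the chain equals $\lambda(N)+s$, and the classical Knuth bound gives $s\ge\lceil\log_{2}\nu(N)\rceil$ whenever $\nu(N)\le 8$. Since $\nu(n)=\nu(2n)=7$, this forces $s\ge 3$. For $2n$ we then have $\ell(2n)\ge\lambda(2n)+3=(m+5)+3=m+8$, matching the upper bound. For $n$ we only obtain $\ell(n)\ge(m+4)+3=m+7$, and the real work is to exclude any chain of length exactly $m+7$ ending at $n$.

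This $+1$ improvement is the main obstacle. Suppose such a chain exists; then it has precisely three small steps and $\nu(n)=2^{3}-1$, the extremal case of the small-step inequality. I would classify each small step by its combinatorial type (doubling-plus-a-smaller-element, summing two distinct earlier elements, repeating the previous element) and carefully track the binary profile of each $a_i$. Using the hypothesis $m>5$, one must then verify that no arrangement of three small steps in a length-$(m+7)$ chain can produce the two-block pattern $10111\,\underbrace{0\cdots 0}_{m-3}\,111$ of $23\cdot 2^m+7$: the arbitrarily long zero gap between the upper and lower blocks forces a fourth small step, since the three small steps would otherwise have to simultaneously build both blocks and bridge the gap, contradicting the $\lambda$-budget. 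This case analysis, which is essentially the content of Thurber's original argument in \cite{9}, is the substantive part of the proof; the rest is routine.
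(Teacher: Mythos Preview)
The paper does not prove this theorem at all: it is quoted verbatim as ``the main result from Thurber~\cite{9}'' and used as input to the rest of the argument, so there is no in-paper proof to compare your proposal against.

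As to the content of your sketch: your two explicit chains are valid and give the upper bounds $\ell(n)\le m+8$ and $\ell(2n)\le m+8$; and the lower bound $\ell(2n)\ge\lambda(2n)+\lceil\log_2\nu(2n)\rceil=(m+5)+3$ is correct and finishes that half. The genuine content is the remaining inequality $\ell(n)\ge m+8$, i.e.\ ruling out a chain with exactly three small steps for $n=23\cdot 2^m+7$. You correctly identify this as the substantive obstacle and describe the intended shape of the case analysis, but you do not carry it out --- you explicitly fall back on Thurber's original argument. So as a self-contained proof your proposal has a real gap at precisely the hard step; as a reference to the literature it is on the same footing as the paper itself, which also simply cites~\cite{9}.
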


Let us remind the factor method, which is a method to construct a chain for the product $nm$ based on chains for $n$ and $m$. 
  \begin{definition}
 Let $c_1$ and $c_2$ be  addition chains respectively for $n_1$ and $n_2$.
 Then $c_1 \times c_2$ is an addition chain for $n_1 \times n_2$ of length $\ell(c_1) + \ell(c_2)$
 where $\times$ is \emph{defined} as follows:
 
 if $ \, c_1=\{a_0,~a_1,~\ldots,~a_r\} \, $ and $ \, c_2=\{b_0,~b_1,~\ldots,~b_l\} \, $, then
 $$
 c_1 \times c_2=\{a_0,~a_1,~\ldots,~a_r,~a_r \times b_1,~a_r \times b_2,~\times,~a_r \times b_l\}.
 $$
 \end{definition}
 The length of the new chain is the sum of the length of the chains, meaning that $\ell(mn) \leq \ell(n) + \ell(m)$.

\section{Our contribution}

\begin{theorem}
Let $n$ be a positive integer satisfying $\ell(2n) = \ell(n)+1$. If the Scholz conjecture holds for $n$, then it also holds for $2n$.
\end{theorem}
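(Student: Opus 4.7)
The plan is to use the algebraic factorization $2^{2n}-1 = (2^n-1)(2^n+1)$ together with the factor method to bound $\ell(2^{2n}-1)$ in terms of $\ell(2^n-1)$ and $\ell(2^n+1)$, and then apply the Scholz hypothesis for $n$.

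First, by the factor method stated above, $\ell(2^{2n}-1) \le \ell(2^n-1) + \ell(2^n+1)$. Next, I would bound $\ell(2^n+1)$ cheaply by exhibiting an explicit chain: take the doubling chain $1,2,4,\ldots,2^n$ (of length $n$), then append $2^n + 1$, which is the sum of the last element and the first element $1$. This yields $\ell(2^n+1) \le n+1$.

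Combining these two bounds gives $\ell(2^{2n}-1) \le \ell(2^n-1) + n + 1$. Now I would invoke the Scholz conjecture for $n$, i.e.\ $\ell(2^n-1) \le \ell(n) + n - 1$, to get
\[
\ell(2^{2n}-1) \;\le\; \ell(n) + n - 1 + n + 1 \;=\; \ell(n) + 2n.
\]
Finally, I would use the hypothesis $\ell(2n) = \ell(n)+1$, which rewrites $\ell(n) = \ell(2n) - 1$, and substitute to obtain $\ell(2^{2n}-1) \le \ell(2n) + 2n - 1$, which is exactly the Scholz bound for $2n$.

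There is essentially no obstacle here: every inequality used is either an instance of the factor method, the trivial doubling construction, or the hypothesis. The only point worth checking carefully is the bookkeeping that $\ell(n)+2n$ matches $\ell(2n)+2n-1$ under the assumption $\ell(2n)=\ell(n)+1$; this is what makes the argument tight and explains why the hypothesis $\ell(2n)=\ell(n)+1$ is exactly what is needed.
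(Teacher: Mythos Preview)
Your proof is correct and follows essentially the same route as the paper: factor $2^{2n}-1=(2^n-1)(2^n+1)$, apply the factor method with the trivial bound $\ell(2^n+1)\le n+1$, invoke the Scholz bound for $n$, and finish with the hypothesis $\ell(2n)=\ell(n)+1$. The paper's version is just a terser rendering of the same chain of inequalities.
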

\begin{proof}
Let $n$ be a positive integer which satisfies $\ell(2n) = \ell(n)+1$ and $ \ell(2^n-1) \leq \ell(n) + n - 1$. Let $n_0 = 2n$ be another positive integer, we have 
$$
2^{n_0} - 1 = (2^n-1)(2^n+1), 
$$
using the factor method, we can deduce a chain for $2^{n_0} - 1$ of length 
$$
\ell(n) + n - 1 + n+1 = \ell(n) + 2n = \ell(n_0) + n_0 -1.
$$
\end{proof}

\begin{theorem}
The Scholz conjecture holds for all integers of the form $U_m = 2^m(23)+7$.
\end{theorem}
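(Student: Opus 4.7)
The preceding Theorem (whose hypothesis is $\ell(2n)=\ell(n)+1$) cannot be applied here, since Thurber's Theorem tells us $\ell(2U_m)=\ell(U_m)$. So I would build a chain for $2^{U_m}-1$ directly. The engine is the identity
\[
2^{a+b}-1=2^{b}\bigl(2^{a}-1\bigr)+\bigl(2^{b}-1\bigr),
\]
which, applied iteratively along a \emph{star} (Brauer) addition chain $1=a_0,a_1,\ldots,a_r=n$ satisfying $a_i=a_{i-1}+a_{f(i)}$ for each $i$ and some $f(i)<i$, extends a chain for $2^{a_{i-1}}-1$ into one for $2^{a_i}-1$ by $a_{f(i)}$ doublings (this is exactly the factor method applied with the length-$a_{f(i)}$ doubling chain) followed by a single addition of the already-constructed $2^{a_{f(i)}}-1$. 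Each step $i$ thus contributes $a_{f(i)}+1$ new entries, and summing gives total length
\[
\sum_{i=1}^{r}\bigl(a_{f(i)}+1\bigr)=r+\sum_{i=1}^{r}(a_i-a_{i-1})=r+n-1.
\]
Hence it is enough to exhibit a star chain for $U_m$ of length $\ell(U_m)=m+8$.

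For $m\ge 5$ (the range of Thurber's Theorem) I would write down the explicit chain
\[
1,\ 2,\ 3,\ 5,\ 7,\ 14,\ 16,\ 23,\ 46,\ 92,\ \ldots,\ 23\cdot 2^{m},\ 23\cdot 2^{m}+7
\]
and verify it by inspection. The successive increments $1,1,2,2,7,2,7,23,46,\ldots,23\cdot 2^{m-1},7$ are all elements occurring earlier in the chain itself (in particular $7=a_4$ is reused at the step producing $14$ and again at the final step producing $U_m$); hence the chain is of Brauer form. It has $m+8$ non-unit entries, matching $\ell(U_m)=m+8$. The handful of remaining values $m\in\{1,2,3,4\}$ correspond to $U_m\le 375$, far inside the range $n<5784689$ in which the Scholz conjecture has already been verified by exhaustive computation. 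Feeding this star chain into the construction of the first paragraph produces an addition chain for $2^{U_m}-1$ of length exactly $\ell(U_m)+U_m-1$, which is the Scholz bound.

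The only real difficulty is finding an \emph{optimal} star chain for $U_m$. The condition $\ell(2U_m)=\ell(U_m)$ means any optimal chain must close with a non-doubling step, which could in principle be incompatible with the Brauer condition. The chain above circumvents this by first reaching $23\cdot 2^{m}$ through pure doublings and then closing with a single addition of the earlier term $a_4=7$; once this chain is on the page, the rest of the proof is the telescoping calculation described above.
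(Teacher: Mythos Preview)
Your argument is correct. You exhibit an explicit Brauer chain
\[
1,\,2,\,3,\,5,\,7,\,14,\,16,\,23,\,46,\,\ldots,\,23\cdot 2^{m},\,23\cdot 2^{m}+7
\]
of length $m+8$ for $U_m$, optimal by Thurber's theorem in the relevant range, and then run Brauer's classical construction (the telescoping identity $2^{a+b}-1=2^{b}(2^{a}-1)+(2^{b}-1)$ applied along a star chain) to obtain a chain for $2^{U_m}-1$ of length exactly $\ell(U_m)+U_m-1$; the finitely many small $m$ fall under the existing computational verification.

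The paper takes a less direct route. Rather than a star chain for $U_m$ itself, it writes down the star chain $1,2,3,5,10,20,23,46,\ldots,23\cdot 2^{m},\,23\cdot 2^{m}+3$ of length $m+7$ for the auxiliary integer $X_m+3=(U_{m+1}-1)/2$, applies the same Brauer mechanism to get a chain for $2^{X_m+3}-1$, and then uses the factorisation $2^{U_{m+1}}-1=2(2^{X_m+3}-1)(2^{X_m+3}+1)+1$ together with the factor method to append $X_m+6$ further steps. If one unwinds this, the paper is implicitly running Brauer on the star chain $\ldots,\,23\cdot 2^{m}+3,\,2(23\cdot 2^{m}+3),\,2(23\cdot 2^{m}+3)+1=U_{m+1}$, so the two proofs are close relatives; yours is simply the more transparent packaging, avoiding the detour through $X_m+3$ and the separate appeal to the factor method. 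One minor boundary remark: the version of Thurber's theorem quoted here requires $m>5$, not $m\ge 5$, so $U_5=743$ should also be pushed into the computational base case; this changes nothing of substance.
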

It will be proven by induction on $m$
\begin{proof}
\begin{enumerate}
\item Thanks to the computational results, the Scholz conjecture holds for $U_m$, $m \in \{1,2,3,4,5\}$
\item A chain for $X_m +3$ (with $X_m = 2^m(23)$) of length $m+7$ can be obtain as follows
$$
\mathcal{C_{X_m}}=\{1,2,3,5,10,20,23,2 \times 23, \cdots, 2^m \times 23, 2^m \times 23 +3\}
$$
\item We know that 
$$
U_{m+1} = 2^{m+1}(23) + 7 
$$
so,
$$
X_m = U_m -7 = \frac{U_{m+1}-1}{2} -3 
$$
which leads to 
\begin{align*}
2^{U_{m+1}} - 1 &= 2^{2(X_m+3)+1}-1 \\
 &= 2(2^{2(X_m+3)}-1)+1 \\
 &= 2(2^{X_m+3}-1)(2^{X_m+3}+1)+1
\end{align*}

From the chain of $(2^{X_m+3}-1)$ and using the factor method, we can then deduce a chain for $2^{U_{m+1}} - 1$ of length 

\begin{align*}
X_m+3 + 1 +1 +1 + \ell(2^{X_m+3}-1) &= X_m+6 + \ell(X_m+3) + X_m+3 - 1 \\
 &= 2X_m+8 + m  \\
 &= (2x_m+7) + (m+9) - 1 \\
 & = U_{m+1} + \ell(U_{m+1}) - 1
\end{align*}

\end{enumerate}
\end{proof}

Two years later, Thurber has also stated that 
\begin{theorem}
For each $m\geq 1$, the set of integers with $v(n) =7$ and $n$ of the binary form $n = 101\cdots m \cdots 11 \cdots k \cdots 11 \cdots m \cdots 1$ (where $k \geq 3$) is an infinite class of integers which $\ell(2n) = \ell(n)$.
\end{theorem}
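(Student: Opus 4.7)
The goal is to establish $\ell(2n)=\ell(n)$ for every $n$ in the seven-bit family
\[
n = 2^{2m+k+7}+2^{2m+k+5}+2^{m+k+4}+2^{m+k+3}+2^{m+2}+2^{m+1}+1 \qquad (m\geq 1,\ k\geq 3).
\]
Since $\ell(2n)\leq \ell(n)+1$ is automatic by appending one doubling step, the plan is to identify a common value $L=L(m,k)$ and prove the four inequalities $\ell(n)\leq L$, $\ell(2n)\leq L$, $\ell(n)\geq L$, and $\ell(2n)\geq L$. Sch\"onhage's inequality $\ell(N)\geq \lfloor\log_2 N\rfloor + \lceil\log_2 v(N)\rceil$, applied with $v(n)=v(2n)=7$, yields $\ell(n)\geq 2m+k+10$ and $\ell(2n)\geq 2m+k+11$. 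I therefore take the target $L = 2m+k+11$, which is the Sch\"onhage lower bound for $2n$.

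For the upper bounds I would exploit the nested factoring
\[
n - 1 = 2^{m+1}\bigl(3+2^{k+2}(3+5\cdot 2^{m+2})\bigr),
\]
which mirrors the three blocks of ones in the binary expansion of $n$. Starting from $1,2,3,5$, doubling $m+2$ times and adding $3$ yields $R = 5\cdot 2^{m+2}+3$; doubling $k+2$ more times and adding $3$ yields $Q = 2^{k+2}R+3$; doubling $m+1$ more times and adding $1$ yields $n$. The total length is $L$, so $\ell(n)\leq L$. A parallel construction for $2n$ replaces the final $+1$ by an earlier reorganisation of the doubling sequence, allowing the trailing $+2$ of $2n = 2^{m+2}Q+2$ to be absorbed into a step that was a small step in the chain for $n$ anyway. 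Verifying that the total count remains $L$ and that no step is wasted is the combinatorial heart of the upper bound construction.

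For the matching lower bound, $\ell(2n)\geq L$ is exactly Sch\"onhage. The inequality $\ell(n)\geq L$, which is strictly stronger than Sch\"onhage by one, is the main obstacle. I would invoke Thurber's refined small-step analysis from \cite{4}: any chain for $n$ attaining Sch\"onhage's bound would use only three small steps to produce the seven $1$-bits of $n$, and at each small step the new element has one of only a few possible shapes relative to the previous $\lambda$. A case analysis shows that these rigid shapes are incompatible with the binary pattern of $n$ when the gap parameters $m$ and $k\geq 3$ are both present and independent, which forces a fourth small step and therefore $\ell(n)\geq L$. Combining the four inequalities gives $\ell(n)=\ell(2n)=L$, as claimed.

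The main obstacle is the sharpened lower bound $\ell(n)\geq L$, since it rules out every Sch\"onhage-tight chain for $n$. The hypothesis $k\geq 3$ enters precisely at the stage where the case analysis must exclude ``accidental'' alignments between the two gap lengths; for smaller $k$ the identity $\ell(2n)=\ell(n)$ can fail, which is consistent with Thurber's original observation. The remaining construction steps and the $2n$ side of the lower bound are comparatively routine.
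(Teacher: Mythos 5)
The paper itself offers no proof of this statement: it is quoted as a known result of Thurber \cite{4}, together with the accompanying fact $\ell(n)=\lambda(n)+4$, and is used only as an input to the author's own theorem on the Scholz conjecture for this family. So there is nothing in the paper to compare your argument against, and your proposal must stand on its own. Its overall architecture is the right one (and is the one Thurber himself uses): fix $L=2m+k+11$ and prove the four inequalities $\ell(n)\le L$, $\ell(2n)\le L$, $\ell(n)\ge L$, $\ell(2n)\ge L$. Your chain for $n$ built from $n-1=2^{m+1}\bigl(3+2^{k+2}(3+5\cdot 2^{m+2})\bigr)$ is correct and does have length $2m+k+11$. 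But the two inequalities that carry the entire content of the theorem are not established.

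First, the lower bounds. The inequality $\ell(N)\ge\lfloor\log_2 N\rfloor+\lceil\log_2 v(N)\rceil$ is not Sch\"onhage's theorem: Sch\"onhage proved $\ell(N)\ge\log_2 N+\log_2 v(N)-2.13$, which for $v(N)=7$ yields only $\ell(N)\ge\lambda(N)+1$. The bound $\ell(N)\ge\lambda(N)+3$ for $v(N)\ge 5$ is a separate and substantially harder theorem (due to Thurber), and even granting it you obtain only $\ell(n)\ge\lambda(n)+3$ and $\ell(2n)\ge\lambda(2n)+3$. The additional unit you need for $n$ itself, i.e.\ $\ell(n)\ge\lambda(n)+4$, is precisely the hard part of Thurber's theorem; ``a case analysis shows that these rigid shapes are incompatible with the binary pattern of $n$'' is a description of the proof one would have to write, not the proof, and it is exactly here that the hypotheses $k\ge 3$ and the specific seven-bit pattern must enter. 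Second, the upper bound $\ell(2n)\le L$. A chain for $2n$ of length $L=\lambda(2n)+3$ can contain only three small steps, whereas your chain for $n$ (and its natural adaptation to $2n=2^{m+2}Q+2$, which has length $2m+k+12$) uses four; no local ``reorganisation'' of a four-small-step chain can remove a small step, so a genuinely different construction for $2n$ is required and none is exhibited. Until both of these gaps are filled, what you have is a correct statement of the strategy together with one of the four required inequalities.
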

He prove that $\ell(n) = \lambda(n) + 4$. We will also prove that the Scholz conjecture holds for such $n$.

\begin{theorem}
The Scholz conjecture on addition chain holds for $n=2^{2m+k+7} + 2^{2m+k+5} + 2^{m+k+4} + 2^{m+k+3} + 2^{m+2} + 2^{m+1} + 1$.
\end{theorem}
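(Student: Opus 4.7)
The plan is to mirror the strategy of the $U_m$ theorem: exhibit an explicit star addition chain for $n$ of length $\ell(n)=\lambda(n)+4=2m+k+11$, then feed it into the Scholz-Brauer construction, the same device implicitly used in the $U_m$ proof when passing from the chain for $X_m+3$ to one for $2^{X_m+3}-1$.

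First, I would rewrite $n$ by grouping adjacent pairs of $1$-bits: since $2^{2m+k+7}+2^{2m+k+5}=5\cdot 2^{2m+k+5}$, $2^{m+k+4}+2^{m+k+3}=3\cdot 2^{m+k+3}$, and $2^{m+2}+2^{m+1}=3\cdot 2^{m+1}$, a direct calculation yields the nested form
\[
n=2Z+1,\qquad Z=2^m R,\qquad R=2^{k+2}S+3,\qquad S=5\cdot 2^{m+2}+3.
\]
This identity dictates the candidate chain
\[
\mathcal{C}_n=\{1,2,3,5,10,\ldots,5\cdot 2^{m+2},\,S,\,2S,\ldots,2^{k+2}S,\,R,\,2R,\ldots,2^{m}R=Z,\,2Z,\,n\},
\]
whose length is $3+(m+2)+1+(k+2)+1+m+1+1=2m+k+11$, matching Thurber's exact value $\ell(n)=\lambda(n)+4$. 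Each non-doubling step in $\mathcal{C}_n$ is an addition of one of $1,2,3$ to the immediately preceding element, and all three of $1,2,3$ appear at the very start, so $\mathcal{C}_n$ is in fact a star chain.

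Next, I would apply the Scholz-Brauer induction to $\mathcal{C}_n$. Using the identity $2^{a_i}-1=2^{a_{i-1}}(2^{a_j}-1)+(2^{a_{i-1}}-1)$ along each star step $a_i=a_{i-1}+a_j$, this construction produces an addition chain for $2^n-1$ of length at most $\ell(\mathcal{C}_n)+n-1=(2m+k+11)+n-1=n+\ell(n)-1$, which is precisely the Scholz bound. Equivalently, one can phrase the final step in the factor-method language of the previous theorem: write $2^n-1=2(2^Z-1)(2^Z+1)+1$, build a chain for $2^Z-1$ by applying the same Brauer device to the initial segment of $\mathcal{C}_n$ ending at $Z$, then finish by $Z$ doublings of $2^Z-1$, one addition of $2^Z-1$, one doubling, and one $+1$.

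The main hurdle is spotting the nested decomposition $n=2\bigl(2^m(2^{k+2}(5\cdot 2^{m+2}+3)+3)\bigr)+1$; once this identity is written down, both the star property of $\mathcal{C}_n$ and the equality of its length with $\ell(n)$ are immediate arithmetic checks. To keep the write-up as self-contained as for the $U_m$ case, the Scholz-Brauer induction step should also be spelled out explicitly on $\mathcal{C}_n$, since that ingredient is not stated as a separate lemma in the excerpt.
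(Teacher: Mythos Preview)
Your argument is correct, but it follows a genuinely different path from the paper's. Both proofs produce a length-$(2m+k+11)$ chain for $n$ and then convert it into a chain for $2^{n}-1$ of the required length, yet the two chains and the conversion mechanisms differ. You begin with the prefix $1,2,3,5$, double up to $5\cdot 2^{m+2}$, and then alternate blocks of doublings with additions of $3$ or $1$; the resulting chain is a bona fide \emph{star} chain, so Brauer's classical theorem applies verbatim and yields $\ell(2^{n}-1)\le \ell(\mathcal{C}_n)+n-1$ in one stroke. The paper instead starts with pure doublings $1,2,\dots,2^{m+2}$, then forms $2^{m+2}+1$, $2^{m+3}+1$, $\beta=2^{m+2}+2^{m+1}+1$, $2(2^{m+3}+1)$, $\alpha=5\cdot 2^{m+2}+3$, and finishes with $n=2^{m+k+3}\alpha+\beta$. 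That chain is \emph{not} star (the steps producing $\beta$ and then $2(2^{m+3}+1)$ do not use the immediately preceding element), so the paper cannot invoke Brauer directly; it compensates with an explicit, line-by-line construction of a chain for $2^{n}-1$, using ad hoc identities such as $2^{2^{m+2}+2^{m+1}+1}-1=2^{2^{m+1}}(2^{2^{m+2}+1}-1)+(2^{2^{m+1}}-1)$ and tallying the extra steps in a table. Amusingly, the key intermediate value coincides: your $S=5\cdot 2^{m+2}+3$ equals the paper's $\alpha$. Your route is shorter and more transparent because the star property does all the bookkeeping for you; the paper's route, while longer, makes the actual chain for $2^{n}-1$ completely explicit.
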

\begin{proof}
Let us rewrite 
\begin{align*}
n &=2^{2m+k+7} + 2^{2m+k+5} + 2^{m+k+4} + 2^{m+k+3} + 2^{m+2} + 2^{m+1} + 1 \\
&= 2^{m+k+3}(2^{m+4}+2+2^{m+2}+1) + (2^{m+2}+1 + 2^{m+1}) \\
&= 2^{m+k+3}(2(2^{m+3}+1)+(2^{m+2}+1)) + (2^{m+2}+1) + 2^{m+1}
\end{align*}

A minimal chain for $n$  is 
$$
\mathcal{c} = \{1,2,\ldots,2^{m+1},2^{m+2},2^{m+2}+1, 2^{m+3}+1,2^{m+1} + (2^{m+2}+1)=\beta,
$$
$$
 2(2^{m+3}+1),  2(2^{m+3}+1) +(2^{m+2}+1)=\alpha, 
 $$
$$ 
 2\alpha,2^2\alpha,\ldots, 2^{m+k+3}\alpha, n = 2^{m+k+3}\alpha + \beta\}.
$$
which is of length 
$$
m+2+1+1+1+1+1+m+k+3+1=2m+k+11 = \lambda(n) + 4.
$$
Using the following rules:
\begin{enumerate}
\item If $A=2B$, then we will get a chain for $2^A-1 = (2^B-1)(2^B+1)$ by adding $B+1$ steps to a chain for $2^B -1$
\item If $A = B + 1$, then we will add two steps ($2^A-1 = 2(2^B-1)+1$) 
\item $2^{m+3}+1 = 2^{m+2}+ 2^{m+2}+1$, so
$$
2^{2^{m+3}+1}-1 = 2^{2^{m+2}}(2^{2^{m+2}+1}-1)+(2^{2^{m+2}}+1)
$$
\item $2^{m+2} + 2^{m+1} + 1 = (2^{m+2}+1) + 2^{m+1}$, so 
$$
2^{2^{m+2}+2^{m+1}+1}-1 = 2^{2^{m+1}}(2^{2^{m+2}+1}-1)+(2^{2^{m+1}}-1) 
$$
\end{enumerate}
We can then construct a chain $\mathcal{C}$ for $2^n-1$ of length
	\begin{center}
\begin{tabular}{ |c|c|c|c| } 
 \hline
 value & step & additional steps & Comment \\ \hline
   $1$ & DBL &   &   \\ \hline
   $2$ & DBL &  &   \\ \hline
    $\ldots$ & DBL &   &  \\ \hline
      $2^{m+1}$ & DBL &   &   \\ \hline
   $2^{m+2}$ & DBL & $2^{m+2} + m + 2 -1$ &   \\ \hline
       $2^{m+2}+1$ & +1 & $2$ &   \\ \hline
    $2^{m+3}+1$ & $+2^{m+2}$ & $2^{m+2}+1$ &   \\ \hline
      $2^{m+2}+2^{m+1}+1 = \beta$ & $2^{m+1}$ & $1$ &  $2^{2^{m+1}}(2^{2^{m+2}+1}-1) \in \mathcal{C}$ \\ \hline
   $2(2^{m+3}+1)$ & DBL & $2^{m+3}+1+1$ &   \\ \hline
    $2(2^{m+3}+1)+(2^{m+2}+1) = \alpha$ & $+2^{m+2}+1$ & $2^{m+2}+1+1$ &    \\ \hline
      $2\alpha$ & DBL & $\alpha +1$ &   \\ \hline
   $2^2\alpha$ & DBL & $2\alpha +1$ &   \\ \hline
    $\ldots$ & DBL & $\ldots$ &  \\ \hline
      $2^{m+k+3} \alpha$ & DBL & $2^{m+k+3}\alpha +1$ &   \\ \hline
   $n = 2^{m+k+3} \alpha + \beta$ & $+\beta$ & $\beta +1$ &   \\ \hline
 
   TOTAL & $4$ small steps   & $ $  & $\ell(n)+n-1 = $ \\ \hline \hline

\end{tabular}
\end{center}

\begin{align*}
\ell &= 2^{m+2} + m+2-1 +2 + 2^{m+2}+1+2+2^{m+3}+ \\ &+1+1+2^{m+2}+1+1+(m+k+3)+\alpha(2^{m+k+3}-1) + \beta \\
&= 2m + k + 13 + 2^{m+4}+2^{m+2} + (2^{m+4}+2^{m+2}+2+1)(2^{m+k+3}-1)+ 2^{m+2}+2^{m+1}+1\\
&=  2m + k +14 + 2^{m+4} + 2^{m+3} + 2^{m+1} + 2^{2m+k+7} + 2^{2m+k+5} 2^{m+k+4} + 2^{m+k+3} - 2^{m+4} - 2^{m+2} -2-1 \\
&= (2m + k +11) + (2^{2m+k+7} + 2^{2m+k+5} 2^{m+k+4} + 2^{m+k+3} + 2^{m+2} + 2^{m+1}) = \ell(n) + n -1.
\end{align*}
\end{proof}

%
%
%
%
%
%
%

\section*{Conclusion}
We don't know if the Scholz conjecture holds for all integers with $\ell(2n) = \ell(n)$. It must be more difficult to 
investigate the case where $\ell(2n) < \ell(n)$.

\section*{Acknowledgment}
The author would like to thank IHES. The work was completed during his visiting fellowship.

\end{document}